\documentclass[12pt,a4paper]{amsart}

%List of packages:
\usepackage{amsmath,amsrefs,amsthm}
\usepackage{tikz-cd}
\usepackage{enumitem}
\usepackage[capitalize]{cleveref}
\usepackage{newtxtext}

%List of commands:
\newcommand{\T}{\mathcal{T}}
\newcommand{\C}{\mathcal{C}}
\newcommand{\Hom}[3]{\operatorname{Hom}_{#1}(#2,#3)}

\newcommand{\Ker}[1]{\operatorname{Ker}(#1)}
\newcommand{\Coker}[1]{\operatorname{Coker}(#1)}
\newcommand{\Z}{\mathbb{Z}}
\newcommand{\Q}{\mathbb{Q}}
\newcommand{\indT}{\operatorname{ind}(\mathcal{T})}
\newcommand{\indC}{\operatorname{ind}(\mathcal{C})}
\newcommand{\EXT}{\operatorname{Ex}(\mathcal{T})}
\newcommand{\ART}{\operatorname{AR}(\mathcal{T})}
\newcommand{\EXC}{\operatorname{Ex}(\mathcal{C})}
\newcommand{\ARC}{\operatorname{AR}(\mathcal{C})}
\newcommand{\EXCu}{\operatorname{Ex}(\mathcal{\underline{C}})}
\newcommand{\ARCu}{\operatorname{AR}(\mathcal{\underline{C}})}
\newcommand{\F}{\mathcal{F}}

%List of theorems etc.:
\newtheorem{theorem}{Theorem}[section]
\newtheorem{lemma}[theorem]{Lemma}
\newtheorem{corollary}[theorem]{Corollary}

\theoremstyle{definition}
\newtheorem*{acknowledgements}{Acknowledgements}

\title[\resizebox{4.7in}{!}{Auslander--Reiten triangles and Grothendieck groups of triangulated categories}]{Auslander--Reiten triangles and Grothendieck groups of triangulated categories}

\author{Johanne Haugland}

\begin{document}

\keywords{Auslander--Reiten triangle, Grothendieck group, triangulated category, Frobenius category}
\subjclass[2010]{18E30, 18F30 (primary); 18E10, 16G70 (secondary)} 

\address{Department of mathematical sciences, NTNU, NO-7491 Trondheim, Norway}
\email{johanne.haugland@ntnu.no}

\begin{abstract} We prove that if the Auslander--Reiten triangles generate the relations for the Grothendieck group of a Hom-finite Krull--Schmidt triangulated category with a (co)generator, then the category has only finitely many isomorphism classes of indecomposable objects up to translation.  This gives a triangulated converse to a theorem of Butler and Auslander--Reiten on the relations for Grothendieck groups. Our approach has applications in the context of Frobenius categories.
\end{abstract}

\maketitle

\section{Introduction}

The notion of almost split sequences was introduced by Auslander and Reiten in \cite{AR2}, and has played a fundamental role in the representation theory of finite dimensional algebras ever since \cite{ARS}. The theory of almost split sequences, later called Auslander--Reiten sequences or just AR-sequences, has also greatly influenced other areas, such as algebraic geometry and algebraic topology \cites{auslander,jorgensen}.

Happel defined Auslander--Reiten triangles in triangulated categories \cite{happel2}. These play a similar role in the triangulated setting as AR-sequences do for abelian or exact categories. While it is known that AR-sequences always exist in the category of finitely generated modules over a finite dimensional algebra, the situation in the triangulated case turns out to be more complicated, and the associated bounded derived category will not necessarily have AR-triangles. In fact, Happel proved that this category has AR-triangles if and only if the algebra is of finite global dimension \cites{happel2,happel}. Reiten and van den Bergh showed that a Hom-finite Krull--Schmidt triangulated category has AR-triangles if and only if it admits a Serre functor \cite{RVdB}. More recently, Diveris, Purin and Webb proved that if a category as above is connected and has a stable component of the Auslander--Reiten quiver of Dynkin tree class, then this implies existence of AR-triangles \cite{DPW}. 

In the abelian setting, there is a well-studied relationship between AR-sequences, representation-finiteness and relations for the Grothendieck group. From Butler \cite{butler}, Auslander--Reiten \cite[Proposition 2.2]{AR} and Yoshino \cite[Theorem 13.7]{yoshino}, we know that if a complete Cohen--Macaulay local ring is of finite representation type, then the Auslander--Reiten sequences generate the relations for the Grothendieck group of the category of Cohen--Macaulay modules. Here we say that our ring is of finite representation type if the category of Cohen--Macaulay modules has only finitely many isomorphism classes of indecomposable objects. A converse to this theorem is given by Auslander for artin algebras \cite{auslander1984} and by Hiramatsu in the case of a Gorenstein ring with an isolated singularity \cite[Theorem 1.2]{hiramatsu}, where the latter is extended by Kobayashi \cite[Theorem 1.2]{kobayashi}. Results of the type described above were recently generalized to the setup of exact categories by Enomoto \cite{enomoto} and to certain extriangulated categories by Padrol, Palu, Pilaud and Plamondon \cite{PPPP}.

A natural question to ask is whether there is a similar connection between AR-triangles, representation-finiteness and the relations for the Grothendieck group in the triangulated case. Xiao and Zhu give a partial answer to this question. Namely, they show that if our triangulated category is locally finite, then the AR-triangles generate the relations for the Grothendieck group \cite[Theorem 2.1]{XZ}. Beligiannis generalizes and gives a converse to this result for compactly generated triangulated categories \cite[Theorem 12.1]{beligiannis}.

In this paper we consider the reverse direction of Xiao and Zhu from a different point of view. We prove that if the Auslander--Reiten triangles generate the relations for the Grothendieck group of a Hom-finite Krull--Schmidt triangulated category with a (co)generator, then the category has only finitely many isomorphism classes of indecomposable objects up to translation. We conclude by an application in the context of Frobenius categories. As an example, we see that our approach recovers results of Hiramatsu and Kobayashi for Gorenstein rings.

\section{Auslander--Reiten triangles and Grothendieck groups}

Let $R~$ be a commutative ring. An $R$-linear category $\T$ is called \textit{Hom-finite} provided that $\Hom{\T}{X}{Y}$ is of finite $R$-length for every pair of objects $X, Y$ in $\T$. An additive category is called a \textit{Krull--Schmidt category} if every object can be written as a finite direct sum of indecomposable objects having local endomorphism rings. In a Krull--Schmidt category, it is well known that every object decomposes essentially uniquely in this way.

Throughout the rest of this paper, we let $\T$ be an essentially small $R$-linear triangulated category. We also assume that $\T$ is a Krull--Schmidt category which is Hom-finite over $R$. We let $\indT$ consist of the indecomposable objects of $\T$, while the translation functor of $\T$ is denoted by $\Sigma$. For simplicity, we use the notation $(A,B)= \Hom{\T}{A}{B}$ and \mbox{$[A,B]=\operatorname{length}_R(\Hom{\T}{A}{B})$}.

We say that $\T$ has finitely many isomorphism classes of indecomposable objects up to translation if there is a finite subset of $\indT$ such that for any $U\in\indT$, there is an integer $n$ such that $\Sigma^n U$ is isomorphic to an object in our finite subset.

Recall from \cite{happel3} that a distinguished triangle 
$~A \rightarrow B \xrightarrow{g} C \xrightarrow{h} \Sigma A~$ in $\T$ is an \textit{Auslander--Reiten triangle} if the following conditions are satisfied:
\begin{enumerate}
    \item $A,C\in\indT$;
    \item $h \neq 0$;
    \item given any morphism $t\colon W \rightarrow C$ which is not a split-epimorphism, there is a morphism $t^\prime\colon W \rightarrow B$ such that $g \circ t^\prime = t$.
\end{enumerate}

Let $\F(\T)$ denote the free abelian group generated by all isomorphism classes $[A]$ of objects $A$ in $\T$, while $K_0(\T,0)$ is the quotient of $\F(\T)$ by the subgroup generated by the set $\{[A\oplus B] - [A] - [B] \mid A,B\in\T\}$. By abuse of notation, objects in $K_0(\T,0)$ are also denoted by $[A]$. As $\T$ is a Krull--Schmidt category, the quotient $K_0(\T,0)$ is isomorphic to the free abelian group generated by isomorphism classes of objects in $\indT$. 

Let $\EXT$ be the subgroup of $K_0(\T,0)$ generated by the subset
\[
  \left\lbrace [X]-[Y]+[Z] \;\middle|\;
  \begin{tabular}{@{}l@{}}
   there exists a distinguished triangle\\
$X \rightarrow Y \rightarrow Z \rightarrow \Sigma X~$ in $\T$
   \end{tabular}
  \right\rbrace.
\]
Similarly, we let $\ART$ denote the subgroup of $K_0(\T,0)$ generated by
\[
  \left\lbrace [X]-[Y]+[Z] \;\middle|\;
  \begin{tabular}{@{}l@{}}
   there exists an AR-triangle\\
$X \rightarrow Y \rightarrow Z \rightarrow \Sigma X~$ in $\T$
   \end{tabular}
  \right\rbrace.
\]
Recall from for instance \cite{happel3} that the Grothendieck group of $\T$ is defined as \mbox{$K_0(\T)=K_0(\T,0)/\EXT$}.

In the proof of our main results, \cref{main result} and \cref{main result2}, we use the well-known fact that an equality in $K_0(\T,0)$ can yield an equality in $\Z$. We need this in the case of $[U,-]$ and $[-,U]$ for an object $U$ in $\T$, but note that the following lemma could be phrased more generally in terms of additive functors. 

\begin{lemma} \label{likning bevares}
	Suppose that $~a_1[X_1]+\cdots+a_r[X_r]=0$ in $K_0(\T,0)~$ for integers $a_i$ and objects $X_i$ in $\T$. Then $~a_1[U,X_1]+\cdots+a_r[U,X_r]=0~$ and $~a_1[X_1,U]+\cdots+a_r[X_r,U]=0~$ in $~\Z~$ for any object $U$ in $\T$.
\end{lemma}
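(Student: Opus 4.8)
The plan is to package the assignment $[A]\mapsto [U,A]$ into a single group homomorphism $K_0(\T,0)\to\Z$ and then evaluate it on the given relation.

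First I would define a map $\phi\colon \F(\T)\to\Z$ on the free abelian group by setting $\phi([A])=[U,A]$ on each generating isomorphism class and extending $\Z$-linearly. This is well-defined: the generators of $\F(\T)$ are isomorphism classes, and $[U,A]=\operatorname{length}_R\Hom{\T}{U}{A}$ depends only on the isomorphism class of $A$; moreover $[U,A]$ is a genuine integer precisely because $\T$ is assumed Hom-finite over $R$.

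The key step is to check that $\phi$ factors through the quotient $K_0(\T,0)=\F(\T)/\langle [A\oplus B]-[A]-[B]\rangle$. For this it suffices to verify that $\phi$ annihilates each defining relation, that is, that $[U,A\oplus B]=[U,A]+[U,B]$ for all objects $A,B$ in $\T$. This follows from the natural isomorphism of $R$-modules $\Hom{\T}{U}{A\oplus B}\cong\Hom{\T}{U}{A}\oplus\Hom{\T}{U}{B}$ together with the additivity of $R$-length on direct sums. Hence $\phi$ induces a homomorphism $\bar\phi\colon K_0(\T,0)\to\Z$ satisfying $\bar\phi([A])=[U,A]$. Applying $\bar\phi$ to the hypothesis $a_1[X_1]+\cdots+a_r[X_r]=0$ in $K_0(\T,0)$ and using that $\bar\phi$ is a group homomorphism then yields $a_1[U,X_1]+\cdots+a_r[U,X_r]=0$ in $\Z$, as claimed.

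I do not expect a serious obstacle here: the entire content of the lemma is the well-definedness of $\bar\phi$, which rests solely on the additivity of the covariant functor $\Hom{\T}{U}{-}$ on direct sums and of $R$-length on direct sums. The only point requiring genuine care is confirming that $\phi$ vanishes on the subgroup of relations, since that is what licenses passage to the quotient. As the remark preceding the statement suggests, one could phrase the argument more abstractly: any additive functor from $\T$ into the category of finite-length $R$-modules, postcomposed with $\operatorname{length}_R$, descends to a homomorphism out of $K_0(\T,0)$, and $\Hom{\T}{U}{-}$ is the relevant instance.
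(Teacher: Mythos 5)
Your proof is correct, but it takes a genuinely different route from the paper's. You descend through the universal property of the quotient: the assignment $[A]\mapsto[U,A]$ on $\F(\T)$ kills each split relation $[A\oplus B]-[A]-[B]$ because $\Hom{\T}{U}{A\oplus B}\cong\Hom{\T}{U}{A}\oplus\Hom{\T}{U}{B}$ and $R$-length is additive, so it induces a homomorphism $K_0(\T,0)\to\Z$ which you evaluate on the given relation. This uses only Hom-finiteness and additivity, and never the Krull-Schmidt hypothesis. The paper instead argues at the level of objects: it moves the negative terms across the equality so that both sides are nonnegative combinations, rewrites each side via the defining relations as the class of a single object $a_1X_1\oplus\cdots\oplus a_rX_r$, and then uses the fact that $K_0(\T,0)$ is free abelian on $\indT$ (which is where Krull-Schmidt enters) to conclude that the relevant object is zero, respectively that the two objects are isomorphic; applying $\Hom{\T}{U}{-}$ and additivity then gives the integer identity. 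Your argument buys generality and robustness: it works verbatim in any essentially small Hom-finite additive category and is precisely the additive-functor formulation that the paper's remark before the lemma gestures at, since any additive functor into finite-length $R$-modules composed with $\operatorname{length}_R$ descends in the same way. The paper's argument buys a short, concrete computation at the cost of leaning on the standing Krull-Schmidt assumption, which your version shows is not actually needed for this lemma.
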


\begin{proof}
	Let $a_1[X_1]+\cdots+a_r[X_r]=0$ in $K_0(\T,0)$. If $a_i \geq 0$ for every $i = 1,2,\dots,r$, we use the defining relations for $K_0(\T,0)$ to obtain 
	\[
	a_1[X_1]+\cdots+a_r[X_r]=[a_1X_1 \oplus \cdots \oplus a_rX_r]=0,
	\]
	where $a_iX_i$ denotes the coproduct of the object $X_i$ with itself $a_i$ times. Consequently, the object $~a_1X_1 \oplus \cdots \oplus a_rX_r~$ is zero in $\T$. Applying $[U,-]$ or $[-,U]$ and using additivity hence yields our desired equations. 
	
	If some of the coefficients $a_i$ are negative, we start by moving all negative terms to the right-hand side of our equality and proceed similarly.
\end{proof}

The lemmas below, which yield a triangulated analogue of \cite[Proposition 2.8]{kobayashi}, provide an important step in the proofs of \cref{main result} and \cref{main result2}. Note that parts of our proof of \cref{Lemma AR part 1} is much the same as the proof of \cite[Lemma 2.2]{DPW}. Observe also that \cref{Lemma AR part 2} follows from \cite[Proposition 3.1]{Webb} in the case where $R~$ is an algebraically closed field, and that the argument generalizes to our context. We include complete proofs for the convenience of the reader.

\begin{lemma} \label{Lemma AR part 1}
	Let $A \xrightarrow{f} B \xrightarrow{g} C \rightarrow \Sigma A$ be an AR-triangle in $\T$. The following statements hold for an object $U$ in $\T$: 
	\begin{enumerate}[label={(\arabic*)}]
			\item The morphism
			$(U,B) \xrightarrow{g_*} (U,C)$ is surjective if and only if $C$ is not a direct summand in $U$.
			\item The morphism \mbox{$(U,A) \xrightarrow{f_*} (U,B)$} is injective if and only if $~\Sigma^{-1}C~$ is not a direct summand in $U$. 
			\item The morphism
			$(B,U) \xrightarrow{f^*} (A,U)$ is surjective if and only if $A$ is not a direct summand in $U$.
			\item The morphism \mbox{$(C,U) \xrightarrow{g^*} (B,U)$} is injective if and only if $~\Sigma A~$ is not a direct summand in $U$. 
	\end{enumerate}
\end{lemma}

\begin{proof}
	Note that $C$ is a direct summand in $U$ if and only if there exists a split epimorphism $U \rightarrow C$. By the definition of an AR-triangle, this is equivalent to $g_*$ not being surjective, which proves \textit{(1)}. 
	
	Our triangle yields the long-exact sequence
	\[
	\cdots \rightarrow (U,\Sigma^{-1}B) \xrightarrow{(\Sigma^{-1}g)_*} (U,\Sigma^{-1}C) \rightarrow (U,A) \xrightarrow{f_*} (U,B) \rightarrow \cdots.
	\]
	The morphism $f_*$ is hence injective if and only if $(\Sigma^{-1}g)_*$ is surjective. By applying part \textit{(1)} to the object $\Sigma U$, we see that $(\Sigma^{-1}g)_*$ is surjective if and only if $C$ is not a direct summand in $\Sigma U$, which is equivalent to $\Sigma^{-1}C$ not being a direct summand in $U$. This shows \textit{(2)}.
	
	The statements \textit{(3)} and \textit{(4)} are verified dually, using that AR-triangles equivalently can be defined in terms of a factorization property for the leftmost morphism, see for instance \cite{happel3}.
\end{proof}

\begin{lemma} \label{Lemma AR part 2}
Let $A \xrightarrow{f} B \xrightarrow{g} C \rightarrow \Sigma A$ be an AR-triangle in $\T$. The following statements hold for an indecomposable object $U$ in $\T$: 
\begin{enumerate}[label={(\arabic*)}]
    \item We have $[U,A]-[U,B]+[U,C] \neq 0$ if and only if $U \simeq C$ or $U \simeq \Sigma^{-1}C$.
    \item We have $[A,U]-[B,U]+[C,U] \neq 0$ if and only if $U \simeq A$ or $U \simeq \Sigma A$.
\end{enumerate}
\end{lemma}

\begin{proof}
From the long exact Hom-sequence arising from our triangle, we get the exact sequence
\[
0 \rightarrow K \rightarrow (U,A) \xrightarrow{f_*} (U,B) \xrightarrow{g_*} (U,C) \rightarrow L \rightarrow 0,
\]
where $K= \Ker{f_*}$ and $L=\Coker{g_*}$. Splitting into short exact sequences and using our finiteness assumption, we see that the alternating sum of the lengths of the objects in the sequence vanishes. This gives the equation
\[
[U,A]-[U,B]+[U,C] = \operatorname{length}_R(K) + \operatorname{length}_R(L).
\]
Consequently, we have \mbox{$[U,A]-[U,B]+[U,C] \neq 0$} if and only if the right-hand side of the equation is also non-zero. This means that either $K$ or $L$ (or both) must be non-zero. The object $K$ is non-zero if and only if $f_*$ is not injective. By part \cref{Lemma AR part 1} part \textit{(2)}, this is the case if and only if $\Sigma^{-1} C$ is a direct summand in $U$. Similarly, the object $L$ is non-zero if and only if $g_*$ is not surjective. Using part \textit{(1)} of \cref{Lemma AR part 1}, this is equivalent to $C$ being a direct summand in $U$. As $U$ is indecomposable, a direct summand in $U$ is necessarily isomorphic to $U$, which finishes our proof of part \textit{(1)}.

Our second statement is shown dually, using part \textit{(3)} and \textit{(4)} of \cref{Lemma AR part 1}.
\end{proof}

We are now ready to prove our two main results, which show that we can study representation-finiteness of our category $\T$ by considering the relations for the associated Grothendieck group.

\begin{theorem} \label{main result}
	Assume there is an object $X$ in $\T$ such that $\Hom{\T}{Y}{X} \neq 0$ or an object $X'$ in $\T$ such that $\Hom{\T}{X'}{Y} \neq 0~$ for every non-zero $Y$ in $\T$. If $~\EXT = \ART$ in $K_0(\T,0)$, then $\T$ has only finitely many isomorphism classes of indecomposable objects.
\end{theorem}

\begin{proof}
Let $X$ be an object with the property described above, and consider the triangle $\Sigma^{-1}X \rightarrow 0 \rightarrow X \xrightarrow{1_X} X$. As this is a distinguished triangle, we have $[\Sigma^{-1}X]+[X] \in \EXT$. By the assumption $\EXT = \ART$, there hence exist AR-triangles
\[
A_i \rightarrow B_i \rightarrow C_i \rightarrow \Sigma A_i 
\]
and integers $a_i$ for $i=1,2,\dots,r$ such that 
\[
[X] + [\Sigma^{-1}X]=\sum_{i=1}^r a_i([A_i]-[B_i]+[C_i])
\]
in $K_0(\T,0)$. Given an object $U$ in $\T$, \cref{likning bevares} now yields the equality
\[
[U,X]+[U,\Sigma^{-1}X]=\sum_{i=1}^r a_i([U,A_i]-[U,B_i]+[U,C_i])
\]
in $\Z$. If $U$ is non-zero, our assumption on $X$ implies that the left-hand side of this equation is non-zero. Hence, there must for every non-zero object $U$ be an integer \mbox{$i\in \{1,\dots,r\}$} such that \mbox{$[U,A_i]-[U,B_i]+[U,C_i] \neq 0$}. In particular, this is true for every $U\in\indT$. By \cref{Lemma AR part 2} part \textit{(1)}, this means that any indecomposable object in $\T$ is isomorphic to an object in the finite set $\{C_i,\Sigma^{-1}C_i\}_{i=1}^{r}$, which yields our desired conclusion. 

The proof in the dual case is similar, using \cref{Lemma AR part 2} part \textit{(2)}.
\end{proof}

In the theorem below, an object $X$ in $\T$ is called a \textit{generator of $~\T$} if
\[
\operatorname{Hom}^*_{\T}(X,Y) = \bigoplus_{n \in \Z} \Hom{\T}{X}{\Sigma^n Y} \neq 0
\]
for any non-zero object $Y$ in $\T$. Dually, an object $X$ is called a \textit{cogenerator of $~\T$} if $\operatorname{Hom}^*_{\T}(Y,X) \neq 0$ for any non-zero $Y$. 

\begin{theorem} \label{main result2}
	Assume that our category $~\T$ has a generator or a cogenerator. If $~\EXT = \ART$ in $K_0(\T,0)$, then $\T$ has only finitely many isomorphism classes of indecomposable objects up to translation.
\end{theorem}

\begin{proof}
	Let $X$ be a cogenerator and consider an indecomposable object $U$ in $\T$. Notice that as $X$ is a cogenerator, there exists an integer $n$ such that $\Hom{\T}{\Sigma^n U}{X} \neq 0$. As in the proof of \cref{main result}, our assumption $~\EXT = \ART$ implies existence of a finite family of AR-triangles
	\[
	A_i \rightarrow B_i \rightarrow C_i \rightarrow \Sigma A_i 
	\]
	which yields an equality 
	\[
	[\Sigma^n U,X]+[\Sigma^n U,\Sigma^{-1}X]=\sum_{i=1}^r a_i([\Sigma^n U,A_i]-[\Sigma^n U,B_i]+[\Sigma^n U,C_i])
	\]
	in $\Z$. The left-hand side of this equation is non-zero, so there is an integer $i\in \{1,\dots,r\}$ such that $[\Sigma^n U,A_i]-[\Sigma^n U,B_i]+[\Sigma^n U,C_i] \neq 0$. By applying \cref{Lemma AR part 2} part \textit{(1)}, this yields that either $\Sigma^n U \simeq C_i$ or \mbox{$\Sigma^{n+1} U \simeq C_i$}. Consequently, every indecomposable object in $\T$ can be obtained as a translation of an object in the finite set $\{C_i\}_{i=1}^r$, which yields our desired conclusion.
	
	The proof in the case where our category $\T$ has a generator is dual, using \cref{Lemma AR part 2} part \textit{(2)}.
\end{proof}

\section{Application to Frobenius categories}

We now move on to an application of \cref{main result}. Throughout the rest of the paper, let $\C$ be an essentially small $R$-linear Frobenius category. Recall that a Frobenius category is an exact category with enough projectives and injectives, and in which these two classes of objects coincide. The stable category of $\C$, i.e.\ the quotient category modulo projective objects, is denoted \mbox{by $\underline{\C}$}. We assume $\C$ to be a Krull--Schmidt category and that the stable category $\underline{\C}$ is Hom-finite. 

As $\C$ is a Frobenius category, the associated stable category is triangulated. Recall that the distinguished triangles in $\underline{\C}$ are isomorphic to triangles of the form $~X \rightarrow Y \rightarrow Z \rightarrow \Omega^{-1} X,~$ where $~0 \rightarrow X \rightarrow Y \rightarrow Z \rightarrow 0~$ is a short exact sequence in $\C$ and $\Omega^{-1} X$ denotes the first cosyzygy of $X$. Note that $\Omega^{-1}$ is a well-defined autoequivalence on the stable category. The morphism $Z \rightarrow \Omega^{-1}X$ in our distinguished triangle above is obtained from the diagram
\[
    \begin{tikzcd}[column sep=15, row sep=20]
   0 \arrow[r] & X \arrow[r] \arrow[d,"1_X"] & Y \arrow[r] \arrow[d] & Z \arrow[r] \arrow[d, dashed] & 0 \\
   0 \arrow[r] & X \arrow[r] & I(X) \arrow[r] & \Omega^{-1}X \arrow[r] & 0,
    \end{tikzcd}
\]
where $I(X)$ is injective and both rows are short exact sequences. For a more thorough introduction to exact categories and the stable category of a Frobenius category, see for instance \cite{happel3}.

Based on the correspondence between short exact sequences in a Frobenius category and distinguished triangles in its stable category, we get results also for Frobenius categories. In order to see this, we need to rephrase some of our terminology in the context of exact categories. Let us first recall that a short exact sequence $~0 \rightarrow A \rightarrow B \xrightarrow{g} C \rightarrow 0~$ in $\C$ is an \textit{Auslander--Reiten sequence} if the following conditions are satisfied:
\begin{enumerate}
    \item $A,C\in\indC$;
    \item the sequence does not split;
    \item given any morphism $t\colon W \rightarrow C$ which is not a split-epimorphism, there is a morphism $t^\prime\colon W \rightarrow B$ such that $g \circ t^\prime = t$.
\end{enumerate}

Just as in the triangulated case, we let $K_0(\C,0)$ denote the free abelian group generated by isomorphism classes of objects in $\C$ modulo the subgroup generated by the set $\{[A\oplus B] - [A] - [B] \mid A,B\in\C\}$. Again, we can define the subgroups $\EXC$ and $\ARC$ of $K_0(\C,0)$, but now in terms of short exact sequences instead of distinguished triangles. Namely, we let $\EXC$ be the subgroup generated by the subset
\[
  \left\lbrace [X]-[Y]+[Z] \;\middle|\;
  \begin{tabular}{@{}l@{}}
   there exists a short exact sequence\\
   $0 \rightarrow X \rightarrow Y \rightarrow Z \rightarrow 0~$ in $\C$
   \end{tabular}
  \right\rbrace
\]
and $\ARC$ the subgroup generated by
\[
  \left\lbrace [X]-[Y]+[Z] \;\middle|\;
  \begin{tabular}{@{}l@{}}
   there exists an AR-sequence\\
$0 \rightarrow X \rightarrow Y \rightarrow Z \rightarrow 0~$ in $\C$
   \end{tabular}
  \right\rbrace.
\]

The next lemma describes a well-known correspondence between AR-sequences in $\C$ and AR-triangles in $\underline{\C}$, see \cite[Lemma 3]{Roggenkamp}. 

\begin{lemma} \label{lemmaAR}
An exact sequence $~0 \rightarrow A \rightarrow B \rightarrow C \rightarrow 0~$ in $\C$ is an AR-sequence in $\C$ if and only if the corresponding distinguished triangle $~A \rightarrow B \rightarrow C \rightarrow \Omega^{-1} A~$ in $\underline{\C}$ is an AR-triangle in $\underline{\C}$.
\end{lemma}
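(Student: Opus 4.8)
The plan is to show that the three defining conditions of an AR-sequence for $0 \to A \xrightarrow{f} B \xrightarrow{g} C \to 0$ match, one for one, the three conditions for the associated triangle $A \to B \to C \to \Omega^{-1}A$ to be an AR-triangle, using the dictionary between short exact sequences in $\C$ and distinguished triangles in $\underline{\C}$. The recurring technical input is that the deflation $g$ is an admissible epimorphism, so that any morphism from a projective-injective object into $C$ lifts along $g$; combined with the fact that $\underline{\C}$-morphisms out of a projective vanish, this lets me transport morphisms and factorizations between $\C$ and $\underline{\C}$.

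I would first dispose of conditions (1) and (2). Since an AR-sequence is non-split, neither $A$ nor $C$ can be projective-injective, for otherwise the sequence would split; being indecomposable in $\C$, each is then either projective or has no nonzero projective summand, so in fact both are non-projective with no projective summand. For such an object $X$, the ring $\End{\underline{\C}}{X}$ is a nonzero quotient of the local ring $\End{\C}{X}$, hence local, so $X$ remains indecomposable in $\underline{\C}$; conversely an object indecomposable in $\underline{\C}$ is, once projective summands are discarded, indecomposable and non-projective in $\C$, so indecomposability transfers in both directions for the reduced representatives used in the correspondence. For condition (2) I would invoke the standard identification $\operatorname{Ext}^1_\C(C,A) \cong \Hom{\underline{\C}}{C}{\Omega^{-1}A}$, under which the class of the sequence corresponds exactly to the connecting morphism $h\colon C \to \Omega^{-1}A$; hence the sequence splits if and only if $h = 0$, which is precisely the negation of the condition $h \neq 0$.

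The crux is condition (3), the right-almost-split property of $g$, and this is where I expect the main difficulty, since ``not a split epimorphism'' and ``factors through $g$'' behave differently in the two categories. The key tool is the equivalence, for $C$ non-projective indecomposable, that a morphism $t\colon W \to C$ is a split epimorphism in $\C$ if and only if $\underline{t}$ is one in $\underline{\C}$. One implication is immediate; for the other, a relation $\underline{t}\,\underline{s} = 1_C$ gives $ts = 1_C + p$ with $p$ factoring through a projective, and locality of $\End{\C}{C}$ forces $1_C + p$ to be invertible (were $p$ a unit, $C$ would be a direct summand of a projective, contradicting non-projectivity), so $t$ is a split epimorphism in $\C$. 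Granting this equivalence, condition (3) transfers both ways: a morphism into $C$ fails to be a split epimorphism in $\C$ exactly when it does in $\underline{\C}$; any factorization through $g$ in $\C$ descends to one in $\underline{\C}$; and conversely a factorization $\underline{g}\,\underline{t'} = \underline{t}$ in $\underline{\C}$ means $t - g t'$ factors through a projective $P$, which I can absorb because the deflation $g$ lifts any morphism $P \to C$. Assembling these three translations in both directions yields the claimed equivalence, with the interplay between the two notions of split epimorphism being the point that genuinely requires care.
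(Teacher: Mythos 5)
The paper itself gives no proof of this lemma: it is stated as a well-known fact with a citation to \cite[Lemma 3]{Roggenkamp}. So your proposal can only be judged on its own merits. The ``only if'' direction --- the only one the paper actually uses, in the proof of \cref{equalsubgroups} --- is complete and correct as you present it: non-splitness forces $A$ and $C$ to be non-projective (an injective left end or projective right end would split the sequence), so their stable endomorphism rings are nonzero local quotients of local rings and they stay indecomposable in $\underline{\C}$; the identification $\operatorname{Ext}^1_{\C}(C,A)\cong \Hom{\underline{\C}}{C}{\Omega^{-1}A}$ correctly converts non-splitness into $h\neq 0$; and your two transport mechanisms for condition (3) --- the trivial direction of the split-epimorphism comparison, and absorbing the discrepancy $t-gt'$ (which factors through a projective $P$) by lifting $P\to C$ along the deflation $g$ --- are exactly the right technical points.

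There is, however, a genuine gap in the ``if'' direction, at precisely the spot you gesture at with ``reduced representatives.'' The lemma concerns a fixed exact sequence with fixed end terms in $\C$, and indecomposability in $\underline{\C}$ does not give indecomposability in $\C$: if $0\to A\to B\to C\to 0$ is an AR-sequence and $Q\neq 0$ is projective, then $0\to A\to B\oplus Q\to C\oplus Q\to 0$ is exact and its associated triangle is isomorphic in $\underline{\C}$ to the AR-triangle $A\to B\to C\to \Omega^{-1}A$ (it is the direct sum with the triangle $0\to Q\to Q\to 0$, which vanishes stably), hence is itself an AR-triangle, since all three defining conditions are invariant under isomorphism of triangles; yet the sequence is not an AR-sequence --- $C\oplus Q$ is decomposable, and the inclusion $C\to C\oplus Q$ is not a split epimorphism in $\C$ but cannot factor through $B\oplus Q\to C\oplus Q$, as that would require a section of $g$. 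The same example shows your key transfer claim ($\underline{t}$ split epi in $\underline{\C}$ implies $t$ split epi in $\C$) genuinely needs $\End{\C}{C}$ local, i.e.\ $C$ indecomposable in $\C$, which in this direction is part of what is to be proved, so the argument as written is circular. The repair is to assume (as the precise formulations in the literature in effect do) that $A$ and $C$ have no nonzero projective summands; then a nontrivial decomposition in $\C$ would survive to $\underline{\C}$ because an object is stably zero iff it is projective, and your proof goes through verbatim. Since the paper invokes only the forward direction, this defect does not touch the paper's results, but your statement of the converse needs the extra hypothesis.
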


We are now ready to show the following lemma regarding the subgroups $\EXC$ and $\ARC$ of $K_0(\C,0)$ and the analogous subgroups of $K_0(\underline{\C},0)$.

\begin{lemma} \label{equalsubgroups}
If \mbox{$~\EXC = \ARC$} in $K_0(\C,0)$, then \mbox{$\EXCu = \ARCu$} in $K_0(\underline{\C},0)$.
\end{lemma}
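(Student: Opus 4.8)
The plan is to use the canonical projection functor $\C \to \underline{\C}$, which is additive and hence induces a group homomorphism $\pi \colon K_0(\C,0) \to K_0(\underline{\C},0)$ sending each $[A]$ to the class of $A$ regarded as an object of $\underline{\C}$. Since every object of $\underline{\C}$ is represented by an object of $\C$ (and projective-injective objects are sent to $0$), this map is surjective. The strategy is to show that $\pi$ carries each of the two distinguished subgroups on the source onto the corresponding subgroup on the target, that is, $\pi(\EXC) = \EXCu$ and $\pi(\ARC) = \ARCu$. Granting these two identities, the conclusion is immediate: assuming $\EXC = \ARC$ and applying $\pi$ gives $\EXCu = \pi(\EXC) = \pi(\ARC) = \ARCu$.

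To prove $\pi(\EXC) = \EXCu$, I would compare generators. For the inclusion $\pi(\EXC) \subseteq \EXCu$, a short exact sequence $0 \to X \to Y \to Z \to 0$ in $\C$ gives rise to a distinguished triangle $X \to Y \to Z \to \Omega^{-1}X$ in $\underline{\C}$, so the generator $[X]-[Y]+[Z]$ of $\EXC$ is sent by $\pi$ to a generator of $\EXCu$. For the reverse inclusion, I would invoke the recollection stated above that every distinguished triangle in $\underline{\C}$ is isomorphic to one of the form $X \to Y \to Z \to \Omega^{-1}X$ arising from a short exact sequence in $\C$. Since the expression $[X]-[Y]+[Z]$ depends only on the isomorphism classes of the objects involved, every generator of $\EXCu$ equals $\pi([X]-[Y]+[Z])$ for some short exact sequence, and hence lies in $\pi(\EXC)$.

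The identity $\pi(\ARC) = \ARCu$ is established by the same two-sided comparison, now using \cref{lemmaAR} in place of the bare sequence-triangle correspondence. For $\pi(\ARC) \subseteq \ARCu$, an AR-sequence in $\C$ yields an AR-triangle in $\underline{\C}$ by \cref{lemmaAR}, so generators map to generators. For the reverse inclusion, given an AR-triangle in $\underline{\C}$, I would first realize it, up to isomorphism of triangles, as one induced by a short exact sequence $0 \to A \to B \to C \to 0$ in $\C$; since being an AR-triangle is invariant under isomorphism of triangles, the realizing triangle is again an AR-triangle, so \cref{lemmaAR} forces the sequence to be an AR-sequence. As before, the class $[A]-[B]+[C]$ is then $\pi$ of a generator of $\ARC$.

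I expect the point requiring the most care to be the two reverse inclusions, namely checking that every distinguished triangle, respectively every AR-triangle, in $\underline{\C}$ is up to isomorphism induced by a short exact sequence, respectively an AR-sequence, in $\C$, so that the generating sets of $\EXCu$ and $\ARCu$ are genuinely hit by $\pi$. This rests on the stated correspondence between short exact sequences and distinguished triangles together with \cref{lemmaAR}, and on the fact that both the relevant classes in $K_0(\underline{\C},0)$ and the property of being an AR-triangle are invariant under isomorphism of triangles. Once surjectivity of $\pi$ onto each generating set is secured, no further computation is needed.
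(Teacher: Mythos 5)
Your proof is correct and follows essentially the same route as the paper: both arguments push the hypothesis along the canonical homomorphism $\pi\colon K_0(\C,0)\to K_0(\underline{\C},0)$, realize each generator of $\EXCu$ by a short exact sequence in $\C$, and convert AR-sequence relations into AR-triangle relations via \cref{lemmaAR}; your version merely packages this as the identities $\pi(\EXC)=\EXCu$ and $\pi(\ARC)=\ARCu$. The one substantive difference is that you additionally prove $\ARCu \subseteq \pi(\ARC)$, which is not needed: the chain $\EXCu = \pi(\EXC) = \pi(\ARC) \subseteq \ARCu \subseteq \EXCu$ already yields the conclusion, the last inclusion being trivial because every AR-triangle is a distinguished triangle. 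This superfluous inclusion is also the only delicate point in your argument. A short exact sequence in $\C$ realizing a given AR-triangle may have projective direct summands in its end terms: adding the split sequence $0 \to P \xrightarrow{1} P \to 0 \to 0$ to an AR-sequence $0 \to A \to B \to C \to 0$ gives $0 \to A\oplus P \to B\oplus P \to C \to 0$, whose associated triangle is isomorphic in $\underline{\C}$ to the original AR-triangle (so it satisfies all three AR-triangle conditions, since $A\oplus P$ is indecomposable as an object of $\underline{\C}$), yet the sequence is not an AR-sequence because $A\oplus P \notin \indC$. So the ``if'' direction of \cref{lemmaAR}, which your reverse inclusion leans on, only holds under an implicit choice of representatives without projective summands; the paper sidesteps this entirely by using only the AR-sequence-to-AR-triangle direction together with the trivial inclusion $\ARCu \subseteq \EXCu$, and you can do the same by simply dropping the reverse inclusion for $\ARCu$.
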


\begin{proof}
Assume $\EXC = \ARC$ in $K_0(\C,0)$ and consider a distinguished triangle in $\underline{\C}$. As we work with isomorphism classes of objects, we can assume that our triangle is of the form $~X \rightarrow Y \rightarrow Z \rightarrow \Omega^{-1} X,~$ where \mbox{$~0 \rightarrow X \rightarrow Y \rightarrow Z \rightarrow 0~$} is a short exact sequence in $\C$. Since \mbox{$\EXC = \ARC$}, there exist AR-sequences $~0 \rightarrow A_i \rightarrow B_i \rightarrow C_i \rightarrow 0~$ and integers $a_i$ for $i=1,2,\dots,r$ such that
\[
[X]-[Y]+[Z]=\sum_{i=1}^r a_i([A_i]-[B_i]+[C_i])
\]
in $K_0(\C,0)$, and hence also in $K_0(\underline{\C},0)$. By \cref{lemmaAR}, the right-hand side of this equation is contained in $\ARCu$. Thus, we have shown that \mbox{$\EXCu \subseteq \ARCu$}. The reverse inclusion is clear. 
\end{proof}

We hence have the following corollary to \cref{main result}. 

\begin{corollary} \label{cor}
Assume there is an object $X$ in $\C$ such that $\Hom{\underline{\C}}{Y}{X} \neq 0$ or an object $X'$ in $\C$ such that $\Hom{\underline{\C}}{X'}{Y} \neq 0$ for every non-zero $Y$ in $\underline{\C}$. If ~$\EXC = \ARC$ in $K_0(\C,0)$, then the following statements hold:
\begin{enumerate}
    \item The category $\C$ has only finitely many isomorphism classes of non-projective indecomposable objects.
    \item If $\C$ has only finitely many indecomposable projective objects up to isomorphism, then $\C$ has only finitely many isomorphism classes of indecomposable objects.
\end{enumerate}
\end{corollary}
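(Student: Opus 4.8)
The plan is to transport the entire situation into the stable category $\underline{\C}$ and then invoke \cref{main result} directly. First I would record that $\underline{\C}$ meets the standing hypotheses needed to apply that theorem with $\T = \underline{\C}$: it is an essentially small $R$-linear triangulated category, it is Hom-finite by assumption, and it is Krull-Schmidt because $\C$ is. The key bookkeeping here is that the indecomposable objects of $\underline{\C}$ are exactly the non-projective indecomposable objects of $\C$, and that two non-projective indecomposables of $\C$ become isomorphic in $\underline{\C}$ if and only if they are already isomorphic in $\C$. This last point follows from the fact that an isomorphism in $\underline{\C}$ lifts to an isomorphism in $\C$ up to projective summands, which can be cancelled using Krull--Schmidt in $\C$. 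Together these observations set up a bijection between isomorphism classes of non-projective indecomposables of $\C$ and isomorphism classes of indecomposables of $\underline{\C}$.

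Next I would feed in the two hypotheses of the corollary. By \cref{equalsubgroups}, the assumption $\EXC = \ARC$ gives $\EXCu = \ARCu$ in $K_0(\underline{\C},0)$, which is precisely the hypothesis $\EXT = \ART$ of \cref{main result} read with $\T = \underline{\C}$. Moreover, the object $X$, viewed in $\underline{\C}$, satisfies $\Hom{\underline{\C}}{Y}{X} \neq 0$ for every non-zero $Y$ in $\underline{\C}$, which is exactly the cogenerator condition in part \textit{(1)} of \cref{main result}. Applying that part of the theorem yields that $\underline{\C}$ has only finitely many isomorphism classes of indecomposable objects.

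To finish part \textit{(1)}, I would translate this finiteness back across the bijection of the first paragraph: finitely many indecomposables in $\underline{\C}$ corresponds to finitely many isomorphism classes of non-projective indecomposables in $\C$. For part \textit{(2)}, I would simply combine \textit{(1)} with the additional assumption that $\C$ has only finitely many indecomposable projectives up to isomorphism; since every indecomposable object of $\C$ is either projective or non-projective, the total number of isomorphism classes of indecomposables is then finite.

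The step I expect to be the main obstacle is the one in the first paragraph, namely verifying carefully that passing to $\underline{\C}$ neither merges nor splits isomorphism classes of non-projective indecomposables and that $\underline{\C}$ is genuinely Krull--Schmidt; once this correspondence is pinned down, the rest of the argument is a direct and essentially formal application of \cref{equalsubgroups} and \cref{main result}.
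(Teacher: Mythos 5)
Your proposal is correct and follows essentially the same route as the paper: both pass to $\underline{\C}$, apply \cref{equalsubgroups} to obtain $\EXCu = \ARCu$, and invoke \cref{main result} part \textit{(1)}. The only difference is that you spell out details the paper leaves implicit, namely that $\underline{\C}$ inherits the Krull-Schmidt property and that isomorphism classes of non-projective indecomposables in $\C$ biject with those of indecomposables in $\underline{\C}$, both of which you justify correctly.
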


\begin{proof}
As $\C$ is an essentially small $R$-linear Krull--Schmidt category, the same is true for the stable category $\underline{\C}$. As $\EXC = \ARC$ in $K_0(\C,0)$, \cref{equalsubgroups} yields that $\EXCu = \ARCu$ in $K_0(\underline{\C},0)$. The result now follows from \cref{main result}.
\end{proof}

Let us consider the example where $R$ is a complete Gorenstein local ring with an isolated singularity. Recall that the category of Cohen--Macaulay $R$-modules is Frobenius. As $R$ is an isolated singularity, the associated stable category is Hom-finite, and completeness of $R$ yields the Krull--Schmidt property. By \cite[Lemma 2.1]{hiramatsu}, our category has an object which satisfies the assumption in the corollary above. Since $R$ is local, there are only finitely many isomorphism classes of indecomposable projective objects. Consequently, part \textit{(2)} of \cref{cor} yields that if the AR-triangles generate the relations for the Grothendieck group of this category, then $R$ has only finitely many isomorphism classes of indecomposable Cohen--Macaulay modules. This recovers \cite[Theorem 1.2]{hiramatsu} of Hiramatsu.

Note that one could, if preferred, state \cref{main result} and \cref{cor} in terms of taking the tensor product with $\Q$, as in the result of Kobayashi \cite[Theorem 1.2]{kobayashi}. Hence, also Kobayashi's conclusions are recovered from our approach in the case of a complete Gorenstein ring. 

\begin{acknowledgements}
The author would like to thank her supervisor Petter Andreas Bergh for helpful discussions and comments. She would also thank an anonymous referee for careful reading and suggestions which led to significant improvement of the paper.
\end{acknowledgements}

\begin{bibdiv}
\begin{biblist}
	
\bib{auslander1984}{article}{
	author={Auslander, Maurice},
	title={Relations for Grothendieck groups of Artin algebras},
	journal={Proc. Amer. Math. Soc.},
	volume={91},
	date={1984},
	number={3},
	pages={336--340},
}

\bib{auslander}{article}{
   author={Auslander, Maurice},
   title={Almost split sequences and algebraic geometry},
   conference={
      title={Representations of algebras},
      address={Durham},
      date={1985},
   },
   book={
      series={London Math. Soc. Lecture Note Ser.},
      volume={116},
      publisher={Cambridge Univ. Press, Cambridge},
   },
   date={1986},
   pages={165--179},
}

\bib{AR}{article}{
   author={Auslander, Maurice},
   author={Reiten, Idun},
   title={Grothendieck groups of algebras and orders},
   journal={J. Pure Appl. Algebra},
   volume={39},
   date={1986},
   number={1-2},
   pages={1--51},
}

\bib{AR2}{article}{
   author={Auslander, Maurice},
   author={Reiten, Idun},
   title={Representation theory of Artin algebras. III. Almost split
   sequences},
   journal={Comm. Algebra},
   volume={3},
   date={1975},
   pages={239--294},
}

\bib{ARS}{book}{
   author={Auslander, Maurice},
   author={Reiten, Idun},
   author={Smal\o , Sverre O.},
   title={Representation theory of Artin algebras},
   series={Cambridge Studies in Advanced Mathematics},
   volume={36},
   publisher={Cambridge University Press, Cambridge},
   date={1995},
   pages={xiv+423},
}

\bib{beligiannis}{article}{
   author={Beligiannis, Apostolos},
   title={Auslander-Reiten triangles, Ziegler spectra and Gorenstein rings},
   journal={$K$-Theory},
   volume={32},
   date={2004},
   number={1},
   pages={1--82},
}

\bib{butler}{article}{
   author={Butler, Michael C. R.},
   title={Grothendieck groups and almost split sequences},
   conference={
      title={Integral representations and applications},
      address={Oberwolfach},
      date={1980},
   },
   book={
      series={Lecture Notes in Math.},
      volume={882},
      publisher={Springer, Berlin-New York},
   },
   date={1981},
   pages={357--368},
}

\bib{DPW}{article}{
   author={Diveris, Kosmas},
   author={Purin, Marju},
   author={Webb, Peter},
   title={Combinatorial restrictions on the tree class of the
   Auslander-Reiten quiver of a triangulated category},
   journal={Math. Z.},
   volume={282},
   date={2016},
   number={1-2},
   pages={405--410},
}

\bib{enomoto}{article}{
	author={Enomoto, Haruhisa},
	title={Relations for Grothendieck groups and representation-finiteness},
	journal={J. Algebra},
	volume={539},
	date={2019},
	pages={152--176},
}

\bib{happel}{article}{
   author={Happel, Dieter},
   title={Auslander-Reiten triangles in derived categories of
   finite-dimensional algebras},
   journal={Proc. Amer. Math. Soc.},
   volume={112},
   date={1991},
   number={3},
   pages={641--648},
}

\bib{happel2}{article}{
   author={Happel, Dieter},
   title={On the derived category of a finite-dimensional algebra},
   journal={Comment. Math. Helv.},
   volume={62},
   date={1987},
   number={3},
   pages={339--389},
}

\bib{happel3}{book}{
   author={Happel, Dieter},
   title={Triangulated categories in the representation theory of
   finite-dimensional algebras},
   series={London Mathematical Society Lecture Note Series},
   volume={119},
   publisher={Cambridge University Press, Cambridge},
   date={1988},
   pages={x+208},
}

\bib{hiramatsu}{article}{
   author={Hiramatsu, Naoya},
   title={Relations for Grothendieck groups of Gorenstein rings},
   journal={Proc. Amer. Math. Soc.},
   volume={145},
   date={2017},
   number={2},
   pages={559--562},
}

\bib{jorgensen}{article}{
   author={J\o rgensen, Peter},
   title={Calabi-Yau categories and Poincar\'{e} duality spaces},
   conference={
      title={Trends in representation theory of algebras and related topics},
   },
   book={
      series={EMS Ser. Congr. Rep.},
      publisher={Eur. Math. Soc., Z\"{u}rich},
   },
   date={2008},
   pages={399--431},
}

\bib{kobayashi}{article}{
   author={Kobayashi, Toshinori},
   title={Syzygies of Cohen-Macaulay modules and Grothendieck groups},
   journal={J. Algebra},
   volume={490},
   date={2017},
   pages={372--379},
}

\bib{PPPP}{article}{
	author={Padrol, Arnau},
	author={Palu, Yann},
	author={Pilaud, Vincent},
	author={Plamondon, Pierre-Guy},
	title={Associahedra for finite type cluster algebras and minimal relations between $g$-vectors},
	journal={arXiv:1906.06861},
	date={2019},
}

\bib{Roggenkamp}{article}{
   author={Roggenkamp, Klaus W.},
   title={Auslander-Reiten triangles in derived categories},
   journal={Forum Math.},
   volume={8},
   date={1996},
   number={5},
   pages={509--533},
}

\bib{RVdB}{article}{
   author={Reiten, Idun},
   author={Van den Bergh, Michel},
   title={Noetherian hereditary abelian categories satisfying Serre duality},
   journal={J. Amer. Math. Soc.},
   volume={15},
   date={2002},
   number={2},
   pages={295--366},
}

\bib{Webb}{article}{
   author={Webb, Peter},
   title={Bilinear forms on Grothendieck groups of triangulated categories},
   conference={
      title={Geometric and topological aspects of the representation theory
      of finite groups},
   },
   book={
      series={Springer Proc. Math. Stat.},
      volume={242},
      publisher={Springer, Cham},
   },
   date={2018},
   pages={465--480},
}

\bib{XZ}{article}{
   author={Xiao, Jie},
   author={Zhu, Bin},
   title={Relations for the Grothendieck groups of triangulated categories},
   journal={J. Algebra},
   volume={257},
   date={2002},
   number={1},
   pages={37--50},
}

\bib{yoshino}{book}{
   author={Yoshino, Yuji},
   title={Cohen-Macaulay modules over Cohen-Macaulay rings},
   series={London Mathematical Society Lecture Note Series},
   volume={146},
   publisher={Cambridge University Press, Cambridge},
   date={1990},
   pages={viii+177},
}

\end{biblist}
\end{bibdiv}

\end{document}